	\let\Cref\crtCref
	\let\cref\crtcref
\newcommand{\citecomment}[2][]{\citen{#2}#1\citevar}
\newcommand{\citeone}[1]{\citecomment{#1}}
\newcommand{\citetwo}[2][]{\citecomment[,~#1]{#2}}
\newcommand{\citevar}{\@ifnextchar\bgroup{;~\citeone}{\@ifnextchar[{;~\citetwo}{]}}}
\newcommand{\citefirst}{\@ifnextchar\bgroup{\citeone}{\@ifnextchar[{\citetwo}{]}}}
\newcommand{\cites}{[\citefirst}
\newcommand{\seqnum}[1]{\href{https://oeis.org/#1}{\rm \underline{#1}}}
\DeclareMathOperator{\id}{id}
\DeclareMathOperator{\lcm}{lcm}
\DeclareMathOperator{\Mag}{Mag}
\DeclareMathOperator{\kMag}{\mathnormal{k}-Mag}
\newtheorem{theorem}{Theorem}
\newtheorem{lemma}[theorem]{Lemma}
\theoremstyle{definition}
\newtheorem{definition}[theorem]{Definition}
\newtheorem{example}[theorem]{Example}
\newtheorem{problem}[theorem]{Problem}
\theoremstyle{remark}
\newtheorem*{remark}{Remark}
\title{\bf Counting Finite Magmas}
\date{}
\author{
Philip Ture\v{c}ek\\
Department Mathematik\thanks{I am no longer a member of this department.}\\
Friedrich-Alexander-Universit\"at\\
Cauerstra{\ss}e 11\\
91058 Erlangen\\
Germany\\
\href{mailto:philip.turecek@fau.de}{\tt philip.turecek@fau.de}
}
\begin{document}
	\maketitle
	\begin{abstract}
		Given a non-negative integer $n$, we establish a formula for the number of finite magmas on a set with cardinality $n$ up to isomorphism. We then generalize the method to operations with arbitrary finite arity, which yields a corrected version of Harrison's formula. Moreover, we present the cycle index as a helpful tool for practical computations and, based on that, we give a suitable code in Sage with a few generated examples.
	\end{abstract}
	
	\section{Introduction}
	While algebraic structures such as fields or Abelian groups are quite easy to classify in the finite case, a classification becomes more challenging if one drops certain conditions, such as the existence of inverse elements, the commutativity, or the associativity. On the other hand, by dropping all conditions, it is sometimes possible to compute the number of isomorphy classes depending on the cardinality of the underlying set. Since the most common algebraic structures have binary operations, we will first focus on those with one such operation.
	\begin{definition}
		A \emph{magma} is a set $M$ equipped with a map $\ast \colon M \times M \to M$, also called (internal binary) \emph{operation} on $M$, which we formally write together as a pair $(M,\ast)$.
	\end{definition}

	Throughout the paper, let $n$ be a non-negative integer and $[n] \coloneqq \{1,\dots,n\}$. Moreover, we consider $([n],\ast)$ as a prototype for a finite magma on a set with cardinality $n$. Note that for $n=0$, there is the empty map as the only possible operation. If one wants to write $\ast$ explicitly, one can do this via an operation table, also called Cayley table:
	\[
	\begin{array}{c|ccc}
		\ast & 1 & \cdots & n \\ \hline
		1 & 1\ast1 & \cdots & 1\ast n \\ 
		\vdots & \vdots & \ddots & \vdots \\
		n & n\ast1 & \cdots & n\ast n \\
	\end{array}.
	\]
	A natural question that arises is how many different operations exist on $[n]$. The answer is easy: For each of the $n^2$ entries of the Cayley table, there are $n$ different possibilities to assign a value, and thus, there are $n^{n^2}$ different operations in total. We set $0^0\coloneqq1$ in order to be consistent.

	Although binary operations, i.e., Cayley tables, can be formally different, it might be possible to obtain one such table by renaming the elements of some other, and vice versa.
	\begin{example}\label{ex:tables}
		Consider the Cayley tables
		\[
		\begin{array}{c|cc}
			\ast & 1 & 2 \\ \hline
			1 & 1 & 1 \\ 
			2 & 1 & 2 \\
		\end{array}
		\quad\text{and}\quad
		\begin{array}{c|cc}
			\star & 1 & 2 \\ \hline
			1 & 1 & 2 \\ 
			2 & 2 & 2 \\
		\end{array},
		\]
		which are, of course, formally different, but by interchanging the elements $1$ and $2$ in the first table, we get
		\[
		\begin{array}{c|cc}
			\ast' & 2 & 1 \\ \hline
			2 & 2 & 2 \\ 
			1 & 2 & 1 \\
		\end{array},
		\]
		which, after rearranging, is obviously the same as the second table. In the same way we also obtain the first table from the second. This means that the two initial tables are structurally the same.
	\end{example}
	Instead of asking how many different operations exist on $[n]$ in total, we can ask how many \emph{structurally} different operations exist on $[n]$. To put it another way:
	\begin{problem}
		How many structurally different magmas exist on a set with cardinality $n$?
	\end{problem}
	\Cref{thm:main}, which goes back to Harrison \cite[Theorem 4]{H}, provides an answer; the corresponding sequence is \seqnum{A001329}. Harrison also gives generalizations for finite algebraic structures with an operation of arbitrary finite arity $k$, which we will call \emph{$k$-magmas}. However, his corresponding formula \cite[Theorem 5]{H} contains a typing error concerning a factorial symbol as well as a more serious error concerning an exponent. For that reason, we will deduce a corrected version in \Cref{sec:gen}; see \Cref{thm:main_k}. This means that sequence \seqnum{A001331} based on Harrison's formula is incorrect, while sequence \seqnum{A091510} describing the same (namely $3$-magmas) is correct.
	\begin{theorem}[Harrison \cite{H}]\label{thm:main}
		There are 
		\[
		\sum_{j \in J_n} \frac{1}{\prod_{i=1}^n i^{j_i} j_i!} \prod_{r,s=1}^n \bigg(\sum_{d\mid\lcm(r,s)} d j_d\bigg)^{\gcd(r,s)j_rj_s}
		\]
		pairwise non-isomorphic magmas on a set with cardinality $n$, where $J_n$ denotes the set of all sequences $j=(j_i)_{i=1}^{\infty}$ of non-negative integers such that $\sum_{i=1}^{\infty} ij_i = n$.
	\end{theorem}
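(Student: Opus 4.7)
The plan is to apply the Cauchy--Frobenius (Burnside) lemma to the natural action of the symmetric group $S_n$ on the set $X$ of all operations $\ast \colon [n]^2 \to [n]$. Two magmas $([n],\ast)$ and $([n],\star)$ are isomorphic iff some $\sigma \in S_n$ satisfies $\sigma(a\ast b) = \sigma(a) \star \sigma(b)$ for all $a,b$, so isomorphism classes of magmas on $[n]$ correspond bijectively to $S_n$-orbits on $X$ under $(\sigma\cdot\ast)(a,b) \coloneqq \sigma(\sigma^{-1}(a)\ast\sigma^{-1}(b))$. Burnside then gives
\[
\#\text{isomorphism classes} \;=\; \frac{1}{n!}\sum_{\sigma \in S_n} \#\mathrm{Fix}(\sigma).
\]

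Next I would group the sum over $\sigma$ by cycle type. Writing $j=(j_i)_{i\ge 1}$ for the type with $j_i$ cycles of length $i$, so that $j\in J_n$, the number of permutations of type $j$ is $n!/\prod_i i^{j_i} j_i!$, and $\#\mathrm{Fix}(\sigma)$ depends only on the cycle type. This matches the leading factor $1/\prod_i i^{j_i} j_i!$ in the statement, reducing the task to counting, for a fixed $\sigma$ of type $j$, the operations $\ast$ satisfying $\sigma(a\ast b) = \sigma(a)\ast\sigma(b)$ for all $a,b \in [n]$.

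The heart of the argument is an orbit analysis of the induced action of $\langle\sigma\rangle$ on $[n]^2$. The key observations I would verify are: (i) if $a$ lies in a $\sigma$-cycle of length $r$ and $b$ in one of length $s$, then the orbit of $(a,b)$ under $\sigma$ has length $\mathrm{lcm}(r,s)$; (ii) consequently the pairs in $[n]^2$ with cycle lengths $(r,s)$ decompose into exactly $rsj_rj_s/\mathrm{lcm}(r,s) = \gcd(r,s)j_rj_s$ such orbits; and (iii) an operation $\ast$ is $\sigma$-invariant iff, on each such orbit, the value $a\ast b$ at any representative can be chosen freely subject to the constraint $\sigma^{\mathrm{lcm}(r,s)}(a\ast b)=a\ast b$, that is, $a\ast b$ must lie in a $\sigma$-cycle whose length divides $\mathrm{lcm}(r,s)$. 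The number of admissible values for $a\ast b$ is therefore $\sum_{d\mid\mathrm{lcm}(r,s)} d\,j_d$, since $d\,j_d$ counts the elements in cycles of length $d$. Multiplying over the $\gcd(r,s)j_rj_s$ orbits and then over all $(r,s) \in [n]^2$ yields the inner product in the formula.

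Combining these pieces reduces Burnside's sum to the claimed expression once the factors of $n!$ cancel. The main conceptual obstacle is point (iii): establishing cleanly that the $\sigma$-invariance constraints across an orbit of $(a,b)$ collapse to the single divisibility condition on the cycle length of $a\ast b$, and that independent choices on distinct orbits give genuinely distinct invariant operations. The remaining subtleties are bookkeeping: handling the edge case $n=0$ by the convention $0^0=1$, and confirming that extending the $J_n$-sum to infinite sequences is harmless because $j_i=0$ whenever $i>n$, so all products truncate to the range $1\le i,r,s\le n$ shown in the statement.
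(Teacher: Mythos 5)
Your proposal is correct and follows essentially the same route as the paper: Burnside applied to the transport-of-structure action of $S_n$ on operations, grouping by cycle type, and counting fixed operations via the $\langle\sigma\rangle$-orbits on $[n]^2$ (which are exactly the paper's ``diagonal chains'' in the rearranged Cayley table), each orbit contributing $\sum_{d\mid\lcm(r,s)} d\,j_d$ admissible values. The point you flag as the main obstacle --- that the invariance constraints on an orbit collapse to the single condition that $a\ast b$ be fixed by $\sigma^{\lcm(r,s)}$, with independent choices across orbits --- is resolved in the paper exactly as you sketch it.
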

	
	\section{Proof of \Cref{thm:main}}\label{sec:pot}
	 We first formalize what we did in \Cref{ex:tables}.
	\begin{definition}\label{def:morphisms}
		Given two magmas $(M,\ast)$ and $(N,\star)$, a \emph{magma homomorphism} is a map $\phi \colon M \to N$ such that $\phi(a\ast a')=\phi(a)\star\phi(a')$ for all $a,a' \in M$. A \emph{magma isomorphism} is a bijective magma homomorphism. If there exists a magma isomorphism $M \to N$, we say that $M$ and $N$ are \emph{isomorphic}, which we write as $M \cong N$, or to be more precise, as $(M,\ast)\cong(N,\star)$; if we additionally have $(M,\ast)=(N,\star)$, an isomorphism is also called \emph{automorphism}.
	\end{definition}
	\begin{remark}
		A bijective magma homomorphism $\phi\colon M \to N$ already implies that its inverse map $\phi^{-1}\colon N \to M$ is also a homomorphism and thus, by being bijective as well, an isomorphism of magmas. To see this, let $b,b' \in N$, and let $a,a'\in M$ be the respective preimages under $\phi$. Then we have
		\[
		\phi^{-1}(b\star b') = \phi^{-1}(\phi(a)\star\phi(a')) = \phi^{-1}(\phi(a\ast a')) = a\ast a' = \phi^{-1}(b) \ast \phi^{-1}(b').
		\]
		Thus, there exists a magma isomorphism $M\to N$ if and only if there exists a magma isomorphism $N\to M$, which justifies the symmetric notation $M\cong N$.
	\end{remark}

	When are two finite magmas $([n],\ast)$ and $([m], \star)$ isomorphic? By definition, this is the case if and only if there is a bijective map $\sigma \colon [n] \to [m]$ such that
	\begin{equation}\label{eqn:iso}
	\sigma(a\ast a')=\sigma(a)\star\sigma(a') \qquad \text{for all } a,a' \in [n],
	\end{equation}
	which we can, after defining $b\coloneqq \sigma(a)$ and $b'\coloneqq \sigma(a')$, equivalently write as
	\begin{equation}\label{eqn:iso'}
	b\star b' = \sigma(\sigma^{-1}(b)\ast\sigma^{-1}(b')) \qquad \text{for all } b,b' \in [m].
	\end{equation}
	In this case, we necessarily have $m=n$, and $\sigma \in S_n$ is a permutation of $[n]$. The reason for rewriting \eqref{eqn:iso} into \eqref{eqn:iso'} is that, given a magma $([n],\ast)$ and a permutation $\sigma \in S_n$, we can define a new operation on $[n]$ by
	\begin{equation}\label{eqn:iso_neu}
	b \ast_\sigma b' \coloneqq \sigma(\sigma^{-1}(b)\ast\sigma^{-1}(b')) \qquad \text{for all } b,b' \in [n].
	\end{equation}
	Then the corresponding Cayley table looks as follows:
	\[
	\begin{array}{c|ccc}
		\ast_\sigma & \sigma(1) & \cdots & \sigma(n) \\ \hline
		\sigma(1) & \sigma(1)\ast_\sigma\sigma(1) = \sigma(1\ast1) & \cdots & \sigma(1)\ast_\sigma\sigma(n) = \sigma(1\ast n) \\ 
		\vdots & \vdots & \ddots & \vdots \\
		\sigma(n) & \sigma(n)\ast_\sigma\sigma(1) = \sigma(n\ast1) & \cdots & \sigma(n)\ast_\sigma\sigma(n) = \sigma(n\ast n) \\
	\end{array}.
	\]
	It is immediately clear through the Cayley table that $\sigma$ is an isomorphism $([n],\ast) \to ([n],\ast_\sigma)$. Conversely, if $([n],\ast) \cong ([n],\star)$, then we see, by combining \eqref{eqn:iso'} and \eqref{eqn:iso_neu}, that there exists a permutation $\sigma\in S_n$ such that $\star = \ast_\sigma$. This means that, given a finite magma, we obtain an isomorphic magma by applying a permutation to all entries of the corresponding Cayley table, as seen in \Cref{ex:tables} with the cyclic permutation $\sigma = (1\:2)$; Bergeron et al.\ \cite{BLL} call such a procedure \emph{transport of structures}. On the other hand, this is the only way to obtain an isomorphic magma from a given finite magma on the same underlying set.
	
	Let $\Mag_n$ denote the set of all magmas on $[n]$. Hence, we can write the subset of $\Mag_n$ consisting of magmas that are isomorphic to $([n],\ast)$ as
	\begin{equation}\label{eqn:orbit}
	\{ ([n],\star) : ([n],\star) \cong ([n],\ast) \} = \{ ([n],\ast_\sigma) : \sigma \in S_n \}.
	\end{equation}
	We call it \emph{isomorphy class} of $([n],\ast)$ because isomorphic objects form an equivalence class (i.e., isomorphy is an equivalence relation); a likewise suitable term is \emph{isomorphy type}. Equivalence classes also arise in group actions, and we can see that the right-hand side of \eqref{eqn:orbit} looks like the orbit of $([n],\ast)$, provided that $S_n$ acts on $\Mag_n$ by $\sigma.([n],\ast) \coloneqq ([n],\ast_\sigma)$. This is indeed the case, which becomes clear via ${\id_{S_n}}. = \id_{\Mag_n}$ and
	\begin{align*}
	a\ \ast_{\sigma\circ\tau} \ a' &= \sigma\circ\tau((\sigma\circ\tau)^{-1}(a)\ast(\sigma\circ\tau)^{-1}(a')) \\ &= \sigma(\tau(\tau^{-1}(\sigma^{-1}(a))\ast\tau^{-1}(\sigma^{-1}(a')))) \\ &= \sigma(\sigma^{-1}(a) \ast_\tau \sigma^{-1}(a')) \\ &= a \ {(\ast_\tau)}_\sigma \ a'  \qquad \text{for all } a,a' \in [n] \text{ and } \sigma, \tau \in S_n,
	\end{align*}
	and thus, we get $(\sigma\circ\tau). = \sigma.\circ\tau.$ for all $\sigma, \tau \in S_n$. Our approach so far shows that each orbit of the action of $S_n$ just detected is nothing else than an isomorphy class of magmas on $[n]$; Bergeron et al.\ \cite{BLL} call such an action \emph{functoriality} of the transports of structures. As we are interested in computing the number of structurally different (i.e., non-isomorphic) magmas, we can thus try to find a way to compute the number of the respective orbits. The following lemma helps us.
	\begin{lemma}[{Cauchy--Frobenius--Burnside \cites{N}[Theorem 14.19]{J}}]\label{lem:orb}
		Given a finite group $G$ acting on a finite set $X$, the number of orbits is
		\[
		\frac1{\lvert G\rvert}  \sum_{g\in G} \lvert X^g\rvert,
		\]
		where $X^g \coloneqq \{x \in X : g.x=x\}$ denotes the fixed-point set of $g\in G$.
	\end{lemma}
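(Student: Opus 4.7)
The plan is to prove the lemma by a classical double-counting argument applied to the incidence set
\[
S \coloneqq \{(g,x) \in G \times X : g.x = x\}.
\]
Counting $\lvert S\rvert$ by fixing the first coordinate, one immediately obtains $\lvert S\rvert = \sum_{g\in G}\lvert X^g\rvert$, which is precisely the quantity appearing in the lemma. Counting $\lvert S\rvert$ by fixing the second coordinate instead yields $\lvert S\rvert = \sum_{x\in X}\lvert G_x\rvert$, where $G_x \coloneqq \{g \in G : g.x = x\}$ denotes the stabilizer of $x$.

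Next I would invoke the orbit--stabilizer theorem: for each $x \in X$ the assignment $gG_x \mapsto g.x$ defines a bijection between the set of left cosets $G/G_x$ and the orbit $G.x \coloneqq \{g.x : g\in G\}$, so that $\lvert G_x\rvert = \lvert G\rvert/\lvert G.x\rvert$. Substituting into the previous identity and pulling out the factor $\lvert G\rvert$ gives
\[
\sum_{g\in G}\lvert X^g\rvert \;=\; \sum_{x\in X}\lvert G_x\rvert \;=\; \lvert G\rvert \sum_{x\in X}\frac{1}{\lvert G.x\rvert}.
\]

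To finish, I would partition $X$ into its orbits. Within a single orbit $O$ every $x \in O$ contributes $1/\lvert O\rvert$, and hence $\sum_{x\in O}1/\lvert O\rvert = 1$. Summing over all orbits, the right-hand side becomes $\lvert G\rvert$ times the total number of orbits, and dividing by $\lvert G\rvert$ yields the claimed formula.

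The only genuine obstacle is the orbit--stabilizer theorem itself; everything else is pure bookkeeping. Since orbit--stabilizer is entirely standard, I would simply quote it from the same references that the lemma is attributed to, rather than reprove the coset bijection here.
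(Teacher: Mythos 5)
Your proposal is correct and follows essentially the same route as the paper: a double count of the incidence set $\{(g,x) : g.x = x\}$, the orbit--stabilizer theorem to replace $\lvert G_x\rvert$ by $\lvert G\rvert/\lvert G.x\rvert$, and the observation that summing $1/\lvert G.x\rvert$ over $X$ counts the orbits. The only difference is the direction in which the chain of equalities is traversed, which is immaterial.
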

	\begin{proof}
		Let $x \in X$, and let $G.x \coloneqq \{g.x : g\in G\}$ and $G_x \coloneqq \{g\in G : g.x=x\}$ denote the orbit and the stabilizer of $x$, respectively. By the orbit-stabilizer theorem \cite[Theorem 14.11]{J}, we have $\lvert G\rvert = \lvert G.x\rvert\lvert G_x\rvert$, and thus, we get
		\begin{align*}
			\lvert\{G.x : x \in X\}\rvert &= \sum_{x\in X} \frac1{\lvert G.x\rvert} \\&=  \frac1{\lvert G\rvert} \sum_{x\in X} \lvert G_x\rvert = \frac1{\lvert G\rvert}\lvert \{(g,x) \in G\times X : g.x=x\}\rvert = \frac1{\lvert G\rvert}  \sum_{g\in G} \lvert X^g\rvert.\qedhere
		\end{align*}
	\end{proof}	
	Transferred to our situation, where $G = S_n$ and $X = \Mag_n$, our goal is to compute $\lvert\Mag_n^\sigma\rvert$ for a given permutation $\sigma \in S_n$. First we see that
	\begin{align*}
		([n],\ast) \in \Mag_n^\sigma &\iff \ast_\sigma = \ast \\&\iff \forall b,b' \in [n] \colon b \ast_\sigma b' = b \ast b'  \\ &\iff \forall b,b' \in [n] \colon \sigma(\sigma^{-1}(b)\ast\sigma^{-1}(b')) = b \ast b'  \\ &\iff \forall a,a' \in [n] \colon \sigma(a\ast a') = \sigma(a) \ast \sigma(a') \\ &\iff \sigma \text{ is an automorphism of }([n],\ast).
	\end{align*}
	Let $\sigma \in S_n$ and $([n],\ast)\in \Mag_n^\sigma$. We arrange the rows and columns of the Cayley table of $\ast$ according to the cycle decomposition of $\sigma$, which means that we obtain a grid of rectangular sections.
	\begin{example}
		Let $n=5$ and $\sigma = (1\: 3\: 4)(2\: 5)$. Then a Cayley table arranged according to $\sigma$ has the shape
		\[
		\begin{array}{c|ccc:cc:}
			\ast & 1 & 3 & 4 & 2 & 5 \\ \hline
			1 & & & & & \\
			3 & & & & & \\
			4 & & & & & \\ \hdashline
			2 & & & & & \\
			5 & & & & & \\ \hdashline
		\end{array}.
		\]
	\end{example}
	Now consider a section of the Cayley table of $\ast$ of size $r \times s$:
	\begin{equation}\label{table_rs}
	\begin{array}{c|cccc}
		\ast  & b & \sigma(b) & \cdots & \sigma^{s-1}(b)  \\ \hline
		a  & x_{0,0} & x_{0,1} & \cdots & x_{0,s-1}  \\
		\sigma(a)  & x_{1,0} & x_{1,1} & \cdots & x_{1,s-1}  \\
		\vdots  & \vdots & \vdots & \ddots & \vdots  \\
		\sigma^{r-1}(a)  & x_{r-1,0} & x_{r-1,1} & \cdots & x_{r-1,s-1}  \\
	\end{array}.
	\end{equation}
	Because $\sigma$ is an automorphism, we have
	\[
	\sigma^k(x_{p,q})=\sigma^k(\sigma^p(a)\ast\sigma^q(b)) = \sigma^{k+p}(a) \ast \sigma^{k+q}(b) = x_{(k+p)\bmod r,\, (k+q)\bmod s}
	\]
	for all $k\in\mathbb Z$ and $(p,q)\in \{0,\dots,r-1\} \times \{0,\dots,s-1\}$, which implies
	\[
	\sigma^k(x_{p,q}) = x_{p,q} \iff x_{(k+p)\bmod r,\, (k+q)\bmod s} = x_{p,q} \iff r \mid k \ \land \ s \mid k \iff \lcm(r,s) \mid k.
	\]
	Thus, each entry $x_{p,q}$ already determines $\lcm(r,s)$ entries, which we can imagine as a diagonal chain directed downwards to the right starting and ending at $x_{p,q}$ (while jumping from one edge to the opposite possibly several times). Because we have $rs = \gcd(r,s)\lcm(r,s)$ entries, we obtain $\gcd(r,s)$ such chains within a rectangular section of size $r\times s$, i.e., we have $\gcd(r,s)$ independent choices of entries for a value assignment.
	
	In order to compute the number of possible value assignments, we first write the cycle type of $\sigma$ as the sequence $j(\sigma)\coloneqq(j_i(\sigma))_{i=1}^{\infty}$ meaning that $\sigma$, written as a composition of pairwise disjoint cycles, consists of $j_i(\sigma)$ cycles of length $i$, also called $i$-cycles. Clearly, we have $j_i(\sigma) = 0$ for all $i > n$ and $\sum_{i=1}^{n} ij_i(\sigma) = n$, which together is equivalent to $\sum_{i=1}^{\infty} ij_i(\sigma) = n$. Now, because of $\sigma^{\lcm(r,s)}(x_{p,q}) = x_{p,q}$, the value of $x_{p,q}$ must lie in a cycle of $\sigma$ whose length is a divisor of $\lcm(r,s)$. Hence, we have $\sum_{d\mid\lcm(r,s)} d j_d(\sigma)$ possible value assignments for each diagonal chain and thus
	\[\bigg(\sum_{d\mid\lcm(r,s)} d j_d(\sigma)\bigg)^{\gcd(r,s)}\]
	possible value assignments for a rectangular section of size $r \times s$. Taking all rectangular sections (according to the cycle type of $\sigma$) into account, we obtain
	\[
	\lvert\Mag_n^\sigma\rvert = \prod_{r,s=1}^n \bigg(\sum_{d\mid\lcm(r,s)} d j_d(\sigma)\bigg)^{\gcd(r,s)j_r(\sigma)j_s(\sigma)}.
	\]
	Note that this expression does not depend on the explicit form of $\sigma$, but only on its cycle type. Together with \Cref{lem:orb}, we finally obtain the number of isomorphy classes of finite magmas on a set with cardinality $n$:
	\begin{equation}\label{eqn:isoclass1}
	\frac{1}{n!}\sum_{\sigma\in S_n} \prod_{r,s=1}^n \bigg(\sum_{d\mid\lcm(r,s)} d j_d(\sigma)\bigg)^{\gcd(r,s)j_r(\sigma)j_s(\sigma)}.
	\end{equation}
	Due to the fact that there exist
	\[
	\frac{n!}{1^{j_1}\cdots n^{j_n} j_1! \cdots j_n!} = \frac{n!}{\prod_{i=1}^n i^{j_i} j_i!}
	\]
	permutations in $S_n$ whose cycle type is $(j_i)_{i=1}^{\infty}$, we can rewrite \eqref{eqn:isoclass1} into
	\begin{equation}\label{eqn:isoclass2}
	\sum_{j \in J_n} \frac{1}{\prod_{i=1}^n i^{j_i} j_i!} \prod_{r,s=1}^n \bigg(\sum_{d\mid\lcm(r,s)} d j_d\bigg)^{\gcd(r,s)j_rj_s},
	\end{equation}
	where $J_n$ denotes the set of all sequences $j=(j_i)_{i=1}^{\infty}$ of non-negative integers such that $\sum_{i=1}^{\infty} ij_i = n$. Thus, we have proved \Cref{thm:main}.
	
	\section{Generalizations}\label{sec:gen}
	Let us first recall how we started in \Cref{sec:pot}. Given a magma $([n],\ast)$ and a permutation $\sigma \in S_n$, we can write the operation $\ast_{\sigma}$ given by \eqref{eqn:iso_neu} also as
	\[
	\ast_{\sigma} = \sigma\circ\ast\circ(\sigma^{-1} \times \sigma^{-1}),
	\]
	where $f \times f$ denotes the componentwise application of a given map $f$ to a pair; clearly, we have $(\sigma \times \sigma)^{-1} = \sigma^{-1} \times \sigma^{-1}$. This means that the diagram
	\begin{equation*}
	\begin{tikzcd}
	{[n] \times [n]} & {[n]} \\
	{[n] \times [n]} & {[n]}
	\arrow["\ast", from=1-1, to=1-2]
	\arrow["{\sigma\times\sigma}"', from=1-1, to=2-1]
	\arrow["\sigma", from=1-2, to=2-2]
	\arrow["{\ast_{\sigma}}"', from=2-1, to=2-2]
	\end{tikzcd}
	\end{equation*}
	commutes, and thus, the permutation $\sigma$ is an isomorphism $([n],\ast) \to ([n],\ast_{\sigma})$. We generalize this phenomenon to operations with higher arity. For that, we introduce analogous notions for the respective algebraic structures.
	\begin{definition}
		Let $k$ be a non-negative integer. A \emph{$k$-magma} is a set $M$ equipped with a map $\ast \colon M^k \to M$, also called (internal $k$-ary) \emph{operation} on $M$, which we formally write together as a pair $(M,\ast)$.
	\end{definition}
	This means that $2$-magmas are magmas in the usual sense. Note that a $0$-magma is nothing else than a set together with a constant (from that set), and thus, the set must necessarily be non-empty. In general, for a finite set of $n$ elements, there are $n^{n^k}$ formally different operations in total, which we can imagine explicitly visualized as $k$-dimensional hypercubes with side length $n$ and thus $n^k$ entries. We get the following analogous definition.
	\begin{definition}
	Given two $k$-magmas $(M,\ast)$ and $(N,\star)$, a \emph{$k$-magma homomorphism} is a map $\phi \colon M \to N$ such that
	\begin{equation}\label{eqn:homomorphism}
	\phi(\ast(a_1,\dots,a_k))=\star(\phi(a_1),\dots,\phi(a_k)) \qquad \text{for all } a_1,\dots,a_k \in M.
	\end{equation}
	The remaining notions of \Cref{def:morphisms} are defined analogously.
	\end{definition}
	Let $f^{\times k} \coloneqq f \times \dots \times f$ denote the componentwise application of a given map $f$ to a $k$-tuple. Then we can write the homomorphism condition \eqref{eqn:homomorphism} also as
	\[
	\phi\circ\ast = \star\circ\phi^{\times k}.
	\]
	For composable maps $f$ and $g$, we clearly have $(f\circ g)^{\times k} = f^{\times k}\circ g^{\times k}$. Also, if $f$ is bijective, we clearly have $(f^{\times k})^{-1} = (f^{-1})^{\times k}$. Now, given a non-negative integer $k$, a $k$-magma $([n],\ast)$, and a permutation $\sigma \in S_n$, we analogously define
	\[
	\ast_{\sigma} \coloneqq \sigma\circ\ast\circ(\sigma^{-1})^{\times k}.
	\]
	This means that the diagram
	\begin{equation*}
		\begin{tikzcd}
			{[n]^k} & {[n]} \\
			{[n]^k} & {[n]}
			\arrow["\ast", from=1-1, to=1-2]
			\arrow["{\sigma^{\times k}}"', from=1-1, to=2-1]
			\arrow["\sigma", from=1-2, to=2-2]
			\arrow["{\ast_{\sigma}}"', from=2-1, to=2-2]
		\end{tikzcd}
	\end{equation*}
	commutes, and thus, the permutation $\sigma$ is an isomorphism $([n],\ast) \to ([n],\ast_{\sigma})$.
	
	Let $\kMag_n$ denote the set of all $k$-magmas on $[n]$. The next analogous step is to see again that $\sigma.([n],\ast) \coloneqq ([n],\ast_{\sigma})$ defines a group action of $S_n$ on $\kMag_n$. Indeed, we have ${\id_{S_n}}. = \id_{\kMag_n}$ and
	\begin{align*}
	\ast_{\sigma\circ\tau}
	&= \sigma\circ\tau\circ\ast\circ((\sigma\circ\tau)^{-1})^{\times k} \\
	&= \sigma\circ\tau\circ\ast\circ(\tau^{-1})^{\times k}\circ(\sigma^{-1})^{\times k}\\
	&= \sigma\circ\ast_{\tau}\circ(\sigma^{-1})^{\times k}\\
	&= {(\ast_{\tau})}_{\sigma} \qquad \text{for all }\sigma,\tau\in S_n,
	\end{align*}
	and thus, we get $(\sigma\circ\tau). = \sigma.\circ\tau.$ for all $\sigma, \tau \in S_n$. This means that we can apply \Cref{lem:orb}, and for that, we compute the cardinality of the fixed-point set $\kMag_n^\sigma$ for a given permutation $\sigma \in S_n$. We get an analogous equivalence:
	\begin{align*}
	([n],\ast) \in \kMag_n^\sigma &\iff \ast_{\sigma} = \ast \\
	&\iff \sigma\circ\ast\circ(\sigma^{-1})^{\times k} = \ast \\
	&\iff \sigma\circ\ast = \ast\circ\sigma^{\times k} \\
	&\iff \sigma \text{ is an automorphism of } ([n],\ast).
	\end{align*}
	Let $\sigma \in S_n$ and $([n],\ast)\in \kMag_n^\sigma$. We continue analogously as in \Cref{sec:pot} by arranging and dividing the respective $k$-dimensional operation hypercube of $\ast$ into hyperrectangular sections, i.e., $k$-dimensional boxes, according to the cycle type of $\sigma$. Considering such a box of size $r_1 \times \dots \times r_k$, we analogously obtain due to the automorphism property of $\sigma$ that each entry already determines $\lcm(r_1,\dots,r_k)$ entries, and thus, we have
	\[
	\frac{r_1\cdots r_k}{\lcm(r_1,\dots,r_k)}
	\]
	independent choices of entries for a value assignment. In order to be consistent, we set $\lcm(r_1,\dots,r_k) \coloneqq 1$ and $\lcm(r_1,\dots,r_k) \coloneqq r_1$ for the cases $k=0$ and $k=1$, respectively. Note that for $k>2$, we have
	\[
	\frac{r_1\cdots r_k}{\lcm(r_1,\dots,r_k)} \ne \gcd(r_1,\dots,r_k)
	\]
	in general, as we can see by the example $(r_1,\dots,r_k) = (2,\dots,2)$, and therefore, we cannot take the greatest common divisor as Harrison mistakenly did \cite[Theorem 5]{H}. The remaining analogous steps are to put the boxes together and eventually apply \Cref{lem:orb}. First we get
	\[
	\lvert\kMag_n^\sigma\rvert = \prod_{(r_1,\dots,r_k)\in [n]^k} \bigg(\sum_{d\mid\lcm(r_1,\dots,r_k)} d j_d(\sigma)\bigg)^{\frac{r_1\cdots r_k}{\lcm(r_1,\dots,r_k)}j_{r_1}(\sigma)\cdots j_{r_k}(\sigma)}.
	\]
	Now, by applying the lemma and using the same notation as in \eqref{eqn:isoclass2}, we obtain the following result.
	\begin{theorem}\label{thm:main_k}
	There are
	\begin{equation}\label{eqn:isoclass_k-magma}
	\sum_{j \in J_n} \frac{1}{\prod_{i=1}^n i^{j_i} j_i!} \prod_{(r_1,\dots,r_k)\in [n]^k} \bigg(\sum_{d\mid\lcm(r_1,\dots,r_k)} d j_d\bigg)^{\frac{r_1\cdots r_k}{\lcm(r_1,\dots,r_k)}j_{r_1}\cdots j_{r_k}}
	\end{equation}
	pairwise non-isomorphic $k$-magmas on a set with cardinality $n$, where $J_n$ denotes the set of all sequences $j=(j_i)_{i=1}^{\infty}$ of non-negative integers such that $\sum_{i=1}^{\infty} ij_i = n$.
	\end{theorem}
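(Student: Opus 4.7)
The plan is to mirror the proof of \Cref{thm:main} step by step, with the adjustments needed for higher arity. The material preceding the theorem already supplies the $S_n$-action $\sigma.([n],\ast) \coloneqq ([n],\ast_\sigma)$ on $\kMag_n$, shows that its orbits are the isomorphy classes, and identifies $\kMag_n^\sigma$ as the set of $k$-magmas for which $\sigma$ is an automorphism. By \Cref{lem:orb}, everything therefore reduces to computing $\lvert\kMag_n^\sigma\rvert$ for each $\sigma \in S_n$ and then summing.

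Fix $\sigma \in S_n$ of cycle type $j(\sigma)$. I would arrange the $k$-dimensional operation hypercube of $\ast$ so that every one of its $k$ coordinate axes is ordered according to the cycle decomposition of $\sigma$. This partitions the hypercube into a grid of $k$-dimensional boxes indexed by $k$-tuples of cycles; a box whose cycles have lengths $r_1,\ldots,r_k$ has shape $r_1 \times \cdots \times r_k$, and there are $j_{r_1}(\sigma)\cdots j_{r_k}(\sigma)$ boxes of that shape. Writing a typical entry as $x_{p_1,\ldots,p_k} \coloneqq \ast(\sigma^{p_1}(a_1),\ldots,\sigma^{p_k}(a_k))$, the automorphism identity gives
\[
\sigma^m(x_{p_1,\ldots,p_k}) = x_{(p_1+m)\bmod r_1,\,\ldots,\,(p_k+m)\bmod r_k},
\]
so the $\sigma$-orbit of $x_{p_1,\ldots,p_k}$ traces a diagonal chain of length $\lcm(r_1,\ldots,r_k)$ inside the box. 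Hence the box contains exactly $r_1\cdots r_k/\lcm(r_1,\ldots,r_k)$ independent entries, and each such entry, being fixed by $\sigma^{\lcm(r_1,\ldots,r_k)}$, must take a value in a cycle of $\sigma$ whose length divides $\lcm(r_1,\ldots,r_k)$, leaving $\sum_{d\mid\lcm(r_1,\ldots,r_k)} dj_d(\sigma)$ admissible values per entry.

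Multiplying over the independent entries inside one box, over the $j_{r_1}(\sigma)\cdots j_{r_k}(\sigma)$ boxes of each shape, and finally over all shapes $(r_1,\ldots,r_k)\in[n]^k$ yields the formula for $\lvert\kMag_n^\sigma\rvert$ stated just before the theorem. Since this count depends only on the cycle type of $\sigma$, I would then group permutations by cycle type using the standard count $n!/\prod_i i^{j_i} j_i!$ of permutations of a given type and insert everything into \Cref{lem:orb}; the factor $n!$ cancels and delivers \eqref{eqn:isoclass_k-magma}. I expect the main obstacle to be the diagonal-orbit step for $k \geq 3$: one must verify carefully that, for the diagonal $\mathbb{Z}$-action on $\prod_{i=1}^k \mathbb{Z}/r_i\mathbb{Z}$, the orbit of every element has length exactly $\lcm(r_1,\ldots,r_k)$, so that the correct exponent is $r_1\cdots r_k/\lcm(r_1,\ldots,r_k)$ rather than $\gcd(r_1,\ldots,r_k)$. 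This is precisely the subtlety Harrison overlooked, and getting it right is the only genuinely new content in the argument beyond the case $k=2$.
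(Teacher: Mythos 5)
Your proposal is correct and follows essentially the same route as the paper: identify the fixed-point set $\kMag_n^\sigma$ with the $k$-magmas admitting $\sigma$ as an automorphism, decompose the operation hypercube into boxes indexed by $k$-tuples of cycles, count the diagonal chains of length $\lcm(r_1,\dots,r_k)$ to get the exponent $r_1\cdots r_k/\lcm(r_1,\dots,r_k)$ and the value count $\sum_{d\mid\lcm(r_1,\dots,r_k)}dj_d$, then apply the Cauchy--Frobenius--Burnside lemma and group permutations by cycle type. Your emphasis on the orbit-length verification (the point where Harrison's $\gcd$ exponent fails) matches the paper's own correction, so there is nothing further to add.
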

	
	\section{Using the cycle index}
	Riedel \cite{R} detected a helpful tool for practical computations of \eqref{eqn:isoclass2}, namely the cycle index, which is a formal polynomial whose monomials represent the cycle types that arise in a given permutation group, i.e., in a subgroup of the group $S_X$ of all permutations of a given finite set $X$. If a group $G$ acts faithfully on $X$, then we can think of $G$ as a permutation group, namely as the subgroup $G. \coloneqq \{g. : g \in G\}$ embedded in $S_X$. It is important to mention that the cycle types arising in this way depend not only on $G$ but in fact on its action on $X$. Analogously to the previous sections, we write $j(g.) = (j_i(g.))_{i=1}^{\infty}$ for the cycle type of the permutation $g.\in S_X$ where $g \in G$. Also, like in \Cref{thm:main,thm:main_k}, let $J_n$ denote the set of all sequences $j=(j_i)_{i=1}^{\infty}$ of non-negative integers such that $\sum_{i=1}^{\infty} ij_i = n$.
	\begin{definition}
		Given a finite group $G$ acting on a finite set $X$, the \emph{cycle index} of $G$ with respect to its action on $X$ is defined as the polynomial
		\[
		\frac1{\lvert G\rvert}\sum_{g\in G}\prod_{i=1}^{\lvert X\rvert} t_i^{j_i(g.)} \in \mathbb Q[t_1,\dots,t_{\lvert X\rvert}].
		\]
	\end{definition}
	This means that the monomial $\prod_{i=1}^{\lvert X\rvert} t_i^{j_i(g.)}$ represents the cycle type $(j_i(g.))_{i=1}^{\infty}$, and its coefficient indicates the corresponding relative frequency among all elements in $G.$. Transferred to our case, where $G=S_n$ plays the key role, we first obtain
	\begin{equation}\label{eqn:cycle_index}
	Z_n \coloneqq \frac{1}{n!}\sum_{\sigma\in S_n}  \prod_{i=1}^n t_i^{j_i(\sigma)} = \sum_{j\in J_n} \frac{1}{\prod_{i=1}^n i^{j_i} j_i!} \prod_{i=1}^n t_i^{j_i} \in \mathbb Q[t_1,\dots,t_n]
	\end{equation}
	as the cycle index of $S_n$ with respect to its natural action on $[n]$, which is given by $\sigma.a = \sigma(a)$.
	\begin{example}
		We have $Z_3 = \frac16(t_1^3+3t_1t_2+2t_3)$ because $S_3$ consists of one identity permutation ($[1^32^03^0]$), three transpositions ($[1^12^13^0]$), and two $3$-cycles ($[1^02^03^1]$).
	\end{example}
	A nice property and computational advantage of $Z_n$ is that it satisfies a recursion.
	\begin{lemma}[Pletsch \cite{P}]
		We have $Z_0 = 1$ and
		\[
		Z_n = \frac1n\sum_{i=1}^n t_i Z_{n-i} \qquad \text{for all }n>0.
		\]
	\end{lemma}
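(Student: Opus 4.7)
My plan is to verify the base case directly and prove the recursion by partitioning each $\sigma \in S_n$ according to the cycle containing a fixed element, say the element $n$. The base case is immediate: $S_0$ consists only of the empty permutation, whose cycle-type monomial is the empty product $1$, so $Z_0 = \frac{1}{0!}\cdot 1 = 1$.

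For $n > 0$, I will start from the first expression $Z_n = \frac{1}{n!}\sum_{\sigma\in S_n}\prod_{i=1}^n t_i^{j_i(\sigma)}$ in \eqref{eqn:cycle_index}. For each $\sigma \in S_n$, let $C_\sigma$ denote the cycle of $\sigma$ containing $n$ and set $i \coloneqq \lvert C_\sigma\rvert$, so that $1 \le i \le n$. The restriction of $\sigma$ to $[n]\setminus C_\sigma$ is again a permutation, and the cycle type of $\sigma$ is obtained from the cycle type of this restriction by adding one extra $i$-cycle. Hence the monomial contributed by $\sigma$ factors as $t_i$ times the monomial of its restriction.

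Next I count: the number of $i$-cycles through $n$ in $S_n$ equals $\binom{n-1}{i-1}(i-1)! = \frac{(n-1)!}{(n-i)!}$, since one chooses the other $i-1$ cycle members from $[n-1]$ and then arranges the resulting $i$ elements cyclically with $n$ in $(i-1)!$ ways. For each such cycle $C$, summing monomials over the $(n-i)!$ permutations of the set $[n]\setminus C$ yields $(n-i)!\,Z_{n-i}$, because the cycle-index sum depends only on the cardinality of the underlying set. Grouping $n! Z_n$ by the value of $i$ therefore gives
\[
n! Z_n \;=\; \sum_{i=1}^n t_i \cdot \frac{(n-1)!}{(n-i)!} \cdot (n-i)!\,Z_{n-i} \;=\; (n-1)!\sum_{i=1}^n t_i Z_{n-i},
\]
and dividing by $n!$ yields the claimed recursion. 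There is no real obstacle here: the key points are the relabeling invariance of the cycle-index generating function and the clean cancellation of the two factorial factors, both of which are entirely mechanical.
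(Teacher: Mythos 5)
Your proof is correct and follows essentially the same route as the paper: fixing one element, classifying permutations by the length $i$ of its cycle, counting the $\frac{(n-1)!}{(n-i)!}$ possible $i$-cycles through it, and pairing each with the $(n-i)!\,Z_{n-i}$ contribution of the complementary permutation before dividing by $n!$. No gaps; the argument matches the paper's proof step for step.
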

	\begin{proof}
		If $n=0$, then we have $\lvert S_n\rvert = \lvert J_n\rvert = 1$, so either sum in \eqref{eqn:cycle_index} runs over one element yielding the empty product $1$. If $n>0$, then we first note that, like $Z_n$, the polynomial $n!Z_n$ is also related to $S_n$ (with respect to the natural action), but with the difference that its coefficients indicate absolute frequencies instead of relative ones. Now, if we choose an element $m\in [n]$, then there are $\frac{(n-1)!}{(n-i)!}$ $i$-cycles in $S_n$ containing $m$. Each such cycle that appears in a permutation in $S_n$ contributes with a factor $t_i$ to the corresponding monomial of $Z_n$, while the remaining contribution comes from a cycle type of a permutation in $S_{n-i}$, i.e., a monomial of $Z_{n-i}$. Thus, the polynomial
		\[
		\frac{(n-1)!}{(n-i)!}t_i(n-i)!Z_{n-i} = (n-1)!t_iZ_{n-i},
		\]
		with absolute frequencies as coefficients, is related to the subset of $S_n$ consisting of permutations that have an $i$-cycle containing $m$. By summing this expression over all $i$ from $1$ to $n$, we cover all permutations in $S_n$ eventually because, given any such permutation, the element $m$ must lie in a cycle whose length is between $1$ and $n$. Thus, we obtain
		\[
		\sum_{i=1}^n (n-1)!t_iZ_{n-i} = n!Z_n,
		\]
		and dividing by $n!$ on both sides finishes the proof.
	\end{proof}
	
	Let us assume that we have computed a specific $Z_n$, possibly via the recursion given in the previous lemma. How can we use it to compute \eqref{eqn:isoclass2}? First we can map each monomial $\prod_{i=1}^n t_i^{j_i}$ of $Z_n$ to $\prod_{r,s=1}^n t_{\lcm(r,s)}^{\gcd(r,s)j_rj_s}$ and obtain
	\[
	Z_n^{[2]}\coloneqq \sum_{j\in J_n} \frac{1}{\prod_{i=1}^n i^{j_i} j_i!} \prod_{r,s=1}^n t_{\lcm(r,s)}^{\gcd(r,s)j_rj_s} \in \mathbb Q[t_1,\dots,t_{n^2}].
	\]
	In a similar way as in \Cref{sec:pot} starting from \eqref{table_rs}, we can see that $Z_n^{[2]}$ is the cycle index of $S_n$ with respect to its action on $[n]^2$ given by $\sigma.(a,a') = (\sigma(a),\sigma(a'))$, which looks technically the same as the automorphism property $\sigma(a\ast a') = \sigma(a)\ast\sigma(a')$ there. The main difference is that in the case of the action on $[n]^2$, one does not have to deal with value assignments, and hence, a chain of length $\lcm(r,s)$ leads to $t_{\lcm(r,s)}$.
	\begin{remark}
		For $n\le4$ and $n\ge4$, the highest index of an indeterminate appearing in $Z_n^{[2]}$ is bounded by $n$ and $\big\lfloor\frac{n^2}4\big\rfloor$, respectively.
	\end{remark}
	\begin{example}
		We have $Z_3^{[2]} = \frac16(t_1^9+3t_1t_2^4+2t_3^3)$ because the identity permutation, a transposition, and a $3$-cycle lead to a permutation fixing nine pairs, a permutation fixing one pair and having four transpositions of pairs, and a permutation consisting of three $3$-cycles of pairs, respectively.
	\end{example}
	Finally, by plugging $\sum_{d\mid\lcm(r,s)} d j_d$ into each $t_{\lcm(r,s)}$ of $Z_n^{[2]}$, we obtain \eqref{eqn:isoclass2}. Of course, we can generalize the method just described by defining
	\begin{equation*}
		Z_n^{[k]}\coloneqq \sum_{j\in J_n} \frac{1}{\prod_{i=1}^n i^{j_i} j_i!} \prod_{(r_1,\dots,r_k)\in [n]^k} t_{\lcm(r_1,\dots,r_k)}^{\frac{r_1\cdots r_k}{\lcm(r_1,\dots,r_k)}j_{r_1}\cdots j_{r_k}} \in \mathbb Q[t_1,\dots,t_{n^k}]
	\end{equation*}
	for all non-negative integers $k$, which, analogously, is the cycle index of $S_n$ with respect to its action on $[n]^k$ given by $\sigma. = \sigma^{\times k}$. Hence, by plugging $\sum_{d\mid\lcm(r_1,\dots,r_k)} d j_d$ into each $t_{\lcm(r_1,\dots,r_k)}$ of $Z_n^{[k]}$, we obtain \eqref{eqn:isoclass_k-magma}. Note that in the case $k=0$, we get $Z_n^{[k]} = t_1$ for all non-negative integers $n$, but the corresponding group action of $S_n$ is not faithful for $n>1$. This means that in order to plug in the respective values, we have to view $t_1$ still decomposed into summands that correspond to the cycle types of $S_n$. In the case $n=3$, for example, we have $Z_n^{[0]} = t_1 = \frac16(t_1+3t_1+2t_1)$, and now we can plug into each summand the corresponding $j_1$ (which eventually yields $1$).
	
	Finally, we present a suitable code in Sage and generate a few terms of the corresponding sequences for $k$-magmas in the cases $k \in \{0,\dots,4\}$:
	\begin{sageexample}
	sage: Pol.<t> = InfinitePolynomialRing(QQ)
	....: @cached_function
	....: def Z(n):
	....:     if n==0: return Pol.one()
	....:     return sum(t[k]*Z(n-k) for k in (1..n))/n
	....: def magmas(n,k): # number of isomorphy classes of k-magmas on a set of n elements
	....:     P = Z(n)
	....:     q = 0
	....:     coeffs = P.coefficients()
	....:     count = 0
	....:     for m in P.monomials():
	....:         p = 1
	....:         V = m.variables()
	....:         T = cartesian_product(k*[V])
	....:         for t in T:
	....:             r = [Pol.varname_key(str(u))[1] for u in t]
	....:             j = [m.degree(u) for u in t]
	....:             D = 0
	....:             lcm_r = lcm(r)
	....:             for d in divisors(lcm_r):
	....:                 try: D += d*m.degrees()[-d-1]    
	....:                 except: break
	....:             p *= D^(prod(r)/lcm_r*prod(j))
	....:         q += coeffs[count]*p
	....:         count += 1
	....:     return q
	sage: [magmas(n,0) for n in (0..10)]
	sage: [magmas(n,1) for n in (0..15)]   # A001372
	sage: [magmas(n,2) for n in (0..6)]    # A001329
	sage: [magmas(n,3) for n in (0..4)]    # incorrectly A001331, correctly A091510
	sage: [magmas(n,4) for n in (0..3)]
	\end{sageexample}
	In the case of a unary operation, we obtain sequence \seqnum{A001372}. If we think of the above sequences as rows of a matrix, then fixing the cardinality instead yields the corresponding columns:
	\begin{sageexample}
	sage: [magmas(0,k) for k in (0..10)]
	sage: [magmas(1,k) for k in (0..10)]
	sage: [magmas(2,k) for k in (0..7)]    # A191363 for k = 1,...,5
	sage: [magmas(3,k) for k in (0..4)]
	sage: [magmas(4,k) for k in (0..3)]
	\end{sageexample}
	In the case of cardinality $2$, we obtain a sequence whose $k$-th terms coincide with sequence \seqnum{A191363} for $k\in \{1,\dots,5\}$.

	\bigskip
	\hrule
	\bigskip

	\noindent 2020 {\it Mathematics Subject Classification}: Primary 08A62.

	\noindent \emph{Keywords:} magma, operation, arity, isomorphy class, group action, cycle type, cycle index

	\bigskip
	\hrule
	\bigskip

	\noindent (Concerned with sequences
	\seqnum{A001329},
	\seqnum{A001331},
	\seqnum{A091510},
	\seqnum{A001372}, and
	\seqnum{A191363}.)

\begin{thebibliography}{9}
		\bibitem{H} M. A. Harrison, The number of isomorphism types of finite algebras, {\it Proc. Amer. Math. Soc.} {\bf 17} (1966), 731--737.
		\bibitem{BLL} F. Bergeron, G. Labelle, and P. Leroux, {\it Combinatorial Species and Tree-like Structures}, Cambridge University Press, 1998.
		\bibitem{N} P. M. Neumann, A lemma that is not Burnside's, {\it Math. Sci.} {\bf 4} (1979), 133--141.
		\bibitem{J} T. W. Judson, {\it Abstract Algebra: Theory and Applications}, \url{http://abstract.ups.edu}, 2022.
		\bibitem{R} M. Riedel, How many non-isomorphic binary structures on the set of $n$ elements?, Mathematics Stack Exchange, \url{https://math.stackexchange.com/q/646925} (version: 2018-09-19)
		\bibitem{P} B. Pletsch, A recursion of the P\'olya polynomial for the symmetric group, {\it Math. Comput. Simulation} {\bf 80} (2010), 1212--1220.
	\end{thebibliography}
\end{document}